 \newtheorem{thm}{Theorem}[section]
 \newtheorem{lem}[thm]{Lemma}
 \theoremstyle{definition}
 \theoremstyle{remark}
 \numberwithin{equation}{section}
 \newcommand{\Real}{\mathbb{R}}
 \newcommand{\tr}{\textbf{tr}}
\newcommand{\e}{\epsilon}
\begin{document}

\title[Sharp Harnack inequalities for hypoelliptic diffusions]{Sharp Harnack inequalities for a family of hypoelliptic diffusions}

\author{Paul W.Y. Lee}
\email{wylee@math.cuhk.edu.hk}
\address{Room 216, Lady Shaw Building, The Chinese University of Hong Kong, Shatin, Hong Kong}

\date{\today}

\begin{abstract}
We prove sharp Harnack inequalities for a family of Kolmogorov-Fokker-Planck type hypoelliptic diffusions. 
\end{abstract}

\maketitle

\section{Introduction}

We study a family of hypoelliptic evolution equations which are not uniformly parabolic. The simplest example of this family is given by the following Kolmogorov equation in $\Real^2$: 
\begin{equation}\label{KFKsim}
\rho_t=\rho_{vv}-v\rho_x. 
\end{equation}

The equation (\ref{KFKsim}) was first studied by Kolmogorov in \cite{Ko}. Even though the second order operator $\partial_v^2$ is not hypoelliptic, it was shown in \cite{Ko} that the differential operator defined by the equation (\ref{KFKsim}) is. Motivated by this, H\"ormander showed in \cite{Ho} that second order differential operators $\sum_i X_i^2+Y$ defined by vector fields $\{X_1,...,X_n,Y\}$ is hypoelliptic if these vector fields together with their iterated Lie brackets span the whole space at each point. 

In this paper, we study hypoelliptic evolution equations on $\Real^n\times\Real^n$:
\begin{equation}\label{KFK}
\rho_t=\sum_{i=1}^n(\rho_{v_iv_i}-U_{v_i} \rho_{v_i}-v_i\rho_{x_i}), 
\end{equation}
where $U$ is a smooth function on $\Real^n\times\Real^n$. Hypoelliptic equations, like the ones in (\ref{KFK}), appeared in many differnet branches in applied mathematics. In particular, the family of equations in (\ref{KFK}) contain the linear kinetic Fokker-Planck equations. The long time asymptotic of solutions to the kinetic Fokker-Planck equations is also a very active area of research in the past decades. For a comprehensive introduction, see \cite{Vi} (see also \cite{Ba} for results which are closely related to this paper). For applications of hypoelliptic  equations to mathematical finance, see \cite{CPP, LPP} and reference therein. 

We are interested in differential Harnack inequalities for non-negative solutions to the equation (\ref{KFK}). All previous works known to the author focus on the case $U\equiv 0$. In \cite{PaPo}, scalar differential Harnack inequalities were proved using explicit formulas of the fundamental solutions. In \cite{Ha3}, matrix differential Harnack inequalities were first proved for the equation (\ref{KFK}) with $n=1$ and $U\equiv 0$ (see also \cite{Hu} for an extension to the higher dimensional case with $U\equiv 0$). Matrix differential Harnack inequality was proved in the case of the heat equation by Hamilton in \cite{Ha1} after the scalar version appeared in \cite{LY}. In the case of the Ricci flow, the matrix differential Harnack inequality was proved in \cite{Ha2}. In this case, unlike the case of the heat equation, one can only establish the scalar version using maximum principle by first proving the matrix one. Similar to the case of the Ricci flow, the scalar differential Harnack inequalities we proved for the equations (\ref{KFK}) cannot be established directly using the maximum principle without proving the matrix inequalities. 

One of the key ideas in the proof was coming from the earlier work of the author in \cite{Le1, Le2}. In there, we obtained new differential Harnack inequalities for linear parabolic equations on manifolds with Ricci curvature lower bounds. Another key idea is the use of comparison principle and explicit solutions of matrix Riccati equations. This appeared in the earlier work of the author with his collaborators in \cite{AgLe1, AgLe2, LLZ, Le3}. In that case, the matrix Riccati equations appeared when we considered linearizations of sub-Riemannian geodesic flows. 

Finally, we obtain Harnack inequalities for solutions of (\ref{KFK}) by integrating the differential one along paths satisfying appropriate affine control systems. For this, it is necessary that any two points can be connected by paths satisfying the control system. In our case, this is in fact correct. This non-trivial controllability issue follows from the results in \cite{JuKu} and \cite{JuSu} and this will be discussed in section 4. 

The paper is organized as follows. In section 2, we will discuss the precise statements of the main results. The matrix differential Harnack inequalities are very closely related to matrix Riccati equations. In section 3, we will prove two lemmas using comparison principle and explicit solutions of the matrix Riccati equations. They are needed for the proof of the main results. In section 4, we give the proof of the matrix differential Harnack inequalities for solutions of (\ref{KFK}) and show how they are related to solutions of the matrix Riccati equations. Finally, we give a brief discussion on controllability issues of affine-control systems. We then use it to prove the Harnack inequalities. 

\smallskip

\section*{Acknowledgements}

The work started from the suggestions by Professor Fabrice Baudoin and Professor Wilfrid Gangbo. The problem on searching for appropriate differential Harnack inequalities for hypoelliptic diffusion equations was posted by Professor Baudoin. The reference \cite{PaPo} was also pointed out by him. The author would like to thank them for their advices and encouragement.

\smallskip

\section{The Main Results}

In this section, we introduce notations that will be used throughout this paper and state of our main results. Let $\rho$ be a non-negative solution of the equation 
\begin{equation}\label{hypo}
\dot\rho=\Delta_v\rho-\left<\nabla_vU,\nabla_v \rho\right>-\left<v,\nabla_x\rho\right>. 
\end{equation}
Here $\Delta_x=\sum_{i=1}^n\partial_{x_i}^2$, $\nabla_{x}f=\sum_i (\partial_{x_i}f)\partial_{x_i}$, $\nabla_{v}f=\sum_i (\partial_{v_i}f)\partial_{v_i}$, $\Delta_v=\sum_{i=1}^n\partial_{v_i}^2$, $\left<\cdot,\cdot\right>$ is the Euclidean inner product on $\Real^n$, and $U$ is a smooth function on $\Real^n\times\Real^n$. 

We will assume that the solutions $\rho$ and the function $U$ satisfy the following growth conditions
\begin{equation}\label{growth}
\begin{split}
&|\nabla \rho|\leq c_0e^{-\frac{1}{2}U}(|x|+|v|), \\
&\nabla^2\rho\geq -k_0e^{k_1(|x|^2+|v|^2)}I, \\
&\nabla^2U\leq k_0e^{k_1(|x|^2+|v|^2)}I. 
\end{split}
\end{equation}

Let $h:\Real^n\times\Real^n\to\Real$ be the function defined by 
\begin{equation}\label{h}
h=-\frac{1}{2}\left<v,\nabla_xU\right>+\frac{1}{2}\Delta_v U-\frac{1}{4}|\nabla_v U|^2. 
\end{equation}
We will also assume that the Hessian $\nabla^2 h$ of $h$ satisfies the following lower bound 
\begin{equation}\label{lb}
\nabla^2 h\geq \left(\begin{array}{cc}
-k_1I & O\\
O & -k_2I\end{array}\right),
\end{equation}
where $k_1$ and $k_2$ are non-negative constants. 

The matrix differential Harnack inequalites are of the following form: 
\begin{equation}\label{mDH}
\nabla^2 \log\rho-\frac{1}{2}\nabla^2 U\geq \frac{1}{2}\left(\begin{array}{cc}
-\frac{s_1}{s_0}I & \frac{s_2}{s_0}I\\
\frac{s_2}{s_0}I & -\frac{\dot s_0}{s_0}I
\end{array}\right), 
\end{equation}
where the functions $s_0$, $s_1$, and $s_2$ depend on $k_1$ and $k_2$. 

The following theorem gives the matrix differential Harnack inequalities for the main results. 

\begin{thm}\label{main}
Let $\rho$ be a non-negative solution of the equation (\ref{hypo}). Assume that the solution $\rho$ and the function $U$ satisfy the growth conditions (\ref{growth}). Assume that the function $h$ defined by (\ref{h}) satisfies (\ref{lb}). Then (\ref{mDH}) holds and the functions $s_0$, $s_1$, and $s_2$ are given by the followings: 
\begin{enumerate}
\item if $k_2^2>2k_1>0$, then 
\[
\begin{split}
&s_0(t)=(\lambda_1^2+\lambda_2^2)\sinh(\lambda_1t)\sinh(\lambda_2t)+2\lambda_1\lambda_2(1-\cosh(\lambda_1t)\cosh(\lambda_2t)), \\
&s_1(t)=\lambda_1\lambda_2(\lambda_1^2-\lambda_2^2)(\lambda_1\sinh(\lambda_1t)\cosh(\lambda_2t) -\lambda_2\cosh(\lambda_1t)\sinh(\lambda_2t)), \\
&s_2(t)=\lambda_1\lambda_2((\lambda_1^2+\lambda_2^2)(\cosh(\lambda_1t)\cosh(\lambda_2t)-1)-2\lambda_1\lambda_2\sinh(\lambda_1t)\sinh(\lambda_2t)),\\
& \lambda_1=\sqrt{k_2+\sqrt{k_2^2-2k_1}},\quad \lambda_2=\sqrt{k_2-\sqrt{k_2^2-2k_1}}, 
\end{split}
\]
\item if $k_2^2=2k_1>0$, then 
\[
\begin{split}
&s_0(t)=\sinh^2(\sqrt{k_2}t)-k_2t^2, \\
&s_1(t)=2k_2^{3/2}(\sqrt{k_2}t+\sinh(\sqrt{k_2}t)\cosh(\sqrt{k_2}t)), \\
&s_2(t)=k_2(\sinh^2(\sqrt{k_2}t)+k_2t^2), 
\end{split}
\]
\item if $k_2^2<2k_1$, then 
\[
\begin{split}
&s_0(t)=\mu_2^2\sinh^2\left(\frac{\mu_1}{\sqrt 2}t\right)-\mu_1^2\sin^2\left(\frac{\mu_2}{\sqrt 2}t\right),\\
&s_1(t)=2\sqrt{k_1}\mu_1\mu_2\left(\mu_1\sin\left(\frac{\mu_2}{\sqrt 2}t\right)\cos\left(\frac{\mu_2}{\sqrt 2}t\right)+ \mu_2\cosh\left(\frac{\mu_1}{\sqrt 2}t\right)\sinh\left(\frac{\mu_1}{\sqrt 2}t\right)\right),\\
&s_2(t)=\sqrt{2k_1}\left(\mu_1^2\sin^2\left(\frac{\mu_2}{\sqrt 2}t\right)+\mu_2^2\sinh^2\left(\frac{\mu_1}{\sqrt 2}t\right)\right), \\
& \mu_1=\sqrt{\sqrt{2k_1}+k_2}, \quad \mu_2=\sqrt{\sqrt{2k_1}-k_2}, 
\end{split}
\]
\item if $k_2^2>2k_1=0$, then 
\[
\begin{split}
&s_0(t)=2\sinh^2(\sqrt{2k_2}t)-\sqrt{2 k_2} t\cosh\left(\sqrt{\frac{k_2}{2}}t\right)\sinh\left(\sqrt{\frac{k_2}{2}}t\right), \\
&s_1(t)=-2\sqrt{2k_2^3}\sinh\left(\sqrt{\frac{k_2}{2}}t\right)\cosh\left(\sqrt{\frac{k_2}{2}}t\right), \\
&s_2(t)=-2k_2\sinh^2\left(\sqrt{\frac{k_2}{2}}t\right), 
\end{split}
\]
\item if $k_2^2=2k_1=0$, then 
$s_0(t)=t^4$, $s_1(t)=6t$, and $s_2(t)=3t^2$. 
\end{enumerate}
\end{thm}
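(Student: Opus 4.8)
The plan is to follow the standard matrix-Harnack strategy adapted to the hypoelliptic setting: introduce the matrix quantity $P := \nabla^2 \log \rho - \tfrac12 \nabla^2 U$ (a symmetric $2n\times 2n$ matrix on $\Real^n_x\times\Real^n_v$), derive the evolution equation it satisfies under the flow of (\ref{hypo}), and then compare $P$ with the explicit solution of the associated matrix Riccati ODE built from the curvature-type lower bound (\ref{lb}) on $\nabla^2 h$. First I would compute, using the Bochner-type identity for the operator $L := \Delta_v - \langle \nabla_v U, \nabla_v\cdot\rangle - \langle v, \nabla_x\cdot\rangle$, the equation governing $\nabla^2\log\rho$. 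Writing $\phi = \log\rho$, one has $\dot\phi = L\phi + |\nabla_v\phi|^2$, and differentiating twice in the combined $(x,v)$ variables produces a term $L(\nabla^2\phi)$, a "gradient-square" term that is quadratic in $\nabla^2\phi$, a commutator term coming from the drift $-\langle v,\nabla_x\cdot\rangle$ (this is where the degeneracy enters: the $v$-derivatives of the drift feed into the $x$-block), and a term involving $\nabla^2 h$ — the function $h$ in (\ref{h}) being precisely the potential that emerges after conjugating away the $U$-dependence. The outcome is a differential inequality of Riccati type for $P$, of the schematic form $\dot P \geq L P + 2\, P \, \mathbf{B}\, P + (\text{linear in }P) + \nabla^2 h$, where $\mathbf B$ is the constant degenerate matrix $\mathrm{diag}(O, I)$ reflecting that only the $v$-directions are elliptic.

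Next I would invoke the two lemmas promised in Section 3 (the comparison principle for matrix Riccati inequalities together with the explicit integration of the model Riccati equation). The model equation is the matrix ODE obtained by replacing $\nabla^2 h$ with its lower bound $\mathrm{diag}(-k_1 I, -k_2 I)$ and dropping the $L P$ diffusion term; its solution, after the block structure is diagonalized, reduces to a scalar linear fourth-order problem whose characteristic roots are exactly the $\lambda_i$ or $\mu_i$ appearing in the five cases — the case division ($k_2^2 \gtrless 2k_1$, and the degenerate sub-cases $k_1=0$ or $k_2^2=2k_1$) is simply the discriminant analysis of that characteristic equation. The functions $s_0, s_1, s_2$ are then read off as specific entries (or ratios of entries) of the fundamental matrix solution, normalized so that $P$ blows up like the fundamental solution's singularity as $t\to 0^+$. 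To make the comparison rigorous I would need the growth conditions (\ref{growth}): they guarantee that $\nabla\log\rho$ and $\nabla^2\log\rho$ do not grow too fast at spatial infinity, so that a suitable maximum-principle / parabolic-barrier argument applies to the (non-compact, degenerate) equation for $P$ — typically by multiplying by a cutoff and controlling error terms, or by the stochastic-representation version of the maximum principle for hypoelliptic operators.

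The main obstacle I expect is twofold. First, the degeneracy of $\mathbf B$: in the elliptic matrix-Harnack proofs (heat equation, Ricci flow) the Riccati nonlinearity $P\mathbf B P$ is "coercive enough" to run the maximum principle, but here $\mathbf B$ has a kernel (the $x$-directions), so one must check that the off-diagonal and $x x$-blocks of $P$ are still controlled — this is exactly why the matrix inequality, rather than a scalar trace inequality, is forced, and why the comparison ODE must be solved as a genuine $2\times 2$-block (equivalently fourth-order scalar) system rather than a first-order scalar Riccati equation. Second, the bookkeeping of the conjugation by $e^{-U/2}$ and the identification of $h$: one must verify that the lower bound (\ref{lb}) on $\nabla^2 h$ is precisely the hypothesis needed so that the inhomogeneous term in the Riccati inequality dominates the model inhomogeneity $\mathrm{diag}(-k_1 I, -k_2 I)$, including the cross-block terms generated by the commutator of $\nabla_x$ and $\nabla_v$ with the drift. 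Once those two points are settled, the five explicit formulae follow from solving the (elementary but lengthy) constant-coefficient ODE in each discriminant regime and matching the $t\to 0^+$ asymptotics, which I would relegate to a computation rather than carry out in the main text.
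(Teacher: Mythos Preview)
Your proposal is correct and follows essentially the same route as the paper: the paper sets $g=\log\rho-\tfrac12 U$, derives exactly the evolution equation $\dot M=\Delta_v M+\langle 2\nabla_v g,\nabla_v M\rangle-\langle v,\nabla_x M\rangle+MC+C^TM+MDM+\nabla^2 h$ for $M=\nabla^2 g$ (your $P$), compares $M$ with the solution $N$ of the model Riccati ODE $\dot N=NC+C^TN+NDN-K$ via a maximum principle with an explicit exponential barrier $\phi$ (your ``cutoff/barrier'' alternative), and then reads off the five cases from Lemma~\ref{explicit} by diagonalizing the associated linear system---precisely the discriminant analysis of the characteristic polynomial you describe. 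The only mild refinement to note is that the barrier used is not a compactly supported cutoff but a function of the form $e^{a(t)|x|^2/2+b(t)|v|^2/2+c(t)}$ blowing up at a finite time, with $a,b,c$ chosen via elementary ODE inequalities; otherwise your outline matches the paper's argument in all essential points.
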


It follows from Theorem \ref{main} that 
\begin{equation}\label{scalar}
\frac{d}{dt}f-|\nabla_vf|^2+\left<v,\nabla_x f\right>-h=\Delta_v\log\rho-\frac{1}{2}\Delta_vU\geq -\frac{n\dot s_0}{2s_0},
\end{equation}
where $f=\log\rho-\frac{1}{2}U$. 

Finally, we obtain Harnack inequalities for non-negative solution of (\ref{hypo}) by integrating (\ref{scalar}) along  paths 
$\gamma=(\gamma_x,\gamma_v)$ satisfying the following control system: 
\begin{equation}\label{spcontrol}
\dot\gamma(t)=\sum_i\gamma_{v_i}(t)\partial_{x_i}+\sum_iu_i(t)\partial_{v_i}, 
\end{equation}
where $u:[s,t]\to \Real^n$ is a piecewise constant control. 

Let $c_{s,t}$ be the optimal control cost defined by 
\[
c_{s,t}(x,y)=\inf_{(u,\gamma)}\left[\int_s^t\frac{1}{4}|u(\tau)|^2-h(\gamma(\tau))d\tau\right],
\]
where the infimum is taken over all piecewise constant controls $u$ and the corresponding paths $\gamma$ satisfying $\gamma(s)=x$, $\gamma(t)=y$, and (\ref{spcontrol}). 

Note that for the cost function $c_{s,t}$ to be well-defined, it is necessary that any two points can be connected by paths satisfying the control system (\ref{spcontrol}) (see section 4 for the detail). 

\begin{thm}\label{main2}
Under the same assumptions and notations as in Theorem \ref{main}, the following Harnack inequality holds for any $t>s>0$:
\[
\frac{\rho_t(y)}{\rho_s(x)}\geq \left(\frac{s_0(t)}{s_0(s)}\right)^{-n/2}e^{-c_{s,t}(x,y)+\frac{1}{2}U(y)-\frac{1}{2}U(x)}. 
\]
\end{thm}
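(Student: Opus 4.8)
The plan is to derive Theorem~\ref{main2} from the scalar differential Harnack inequality~(\ref{scalar}) by integrating it along an admissible path for the control system~(\ref{spcontrol}). Write $f=\log\rho-\frac12 U$ as in~(\ref{scalar}). Fix $t>s>0$ and two points $x,y\in\Real^n\times\Real^n$. By the controllability discussion in section~4 (which relies on \cite{JuKu,JuSu}), there is at least one piecewise constant control $u:[s,t]\to\Real^n$ and a corresponding path $\gamma=(\gamma_x,\gamma_v)$ satisfying $\dot\gamma_{x_i}=\gamma_{v_i}$, $\dot\gamma_{v_i}=u_i$, $\gamma(s)=x$, $\gamma(t)=y$; moreover the cost functional is finite along a minimizing sequence, so $c_{s,t}(x,y)$ is well-defined. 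The main computation is to evaluate $\frac{d}{d\tau}\big(f(\tau,\gamma(\tau))\big)$ along such a path and compare it with the right-hand side of~(\ref{scalar}).

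First I would compute, using the chain rule and the control equations,
\[
\frac{d}{d\tau}f(\tau,\gamma(\tau))=\big(\partial_\tau f\big)(\tau,\gamma(\tau))+\langle u(\tau),\nabla_v f\rangle+\langle \gamma_v(\tau),\nabla_x f\rangle.
\]
From~(\ref{scalar}) we have $\partial_\tau f\geq |\nabla_v f|^2-\langle \gamma_v,\nabla_x f\rangle+h-\frac{n\dot s_0}{2s_0}$ along the path, so
\[
\frac{d}{d\tau}f(\tau,\gamma(\tau))\geq |\nabla_v f|^2+\langle u,\nabla_v f\rangle+h(\gamma(\tau))-\frac{n\dot s_0(\tau)}{2s_0(\tau)}.
\]
The key pointwise inequality is the elementary bound $|\nabla_v f|^2+\langle u,\nabla_v f\rangle\geq -\tfrac14|u|^2$, obtained by completing the square (i.e. $|\nabla_v f+\tfrac12 u|^2\geq0$). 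Hence
\[
\frac{d}{d\tau}f(\tau,\gamma(\tau))\geq -\tfrac14|u(\tau)|^2+h(\gamma(\tau))-\frac{n\dot s_0(\tau)}{2s_0(\tau)}.
\]
Integrating from $s$ to $t$ gives
\[
f(t,y)-f(s,x)\geq -\int_s^t\Big(\tfrac14|u(\tau)|^2-h(\gamma(\tau))\Big)d\tau-\frac n2\log\frac{s_0(t)}{s_0(s)}.
\]
Taking the infimum over all admissible $(u,\gamma)$ joining $x$ to $y$ turns the first term into $-c_{s,t}(x,y)$, and exponentiating, together with $f=\log\rho-\tfrac12 U$, yields exactly
\[
\frac{\rho_t(y)}{\rho_s(x)}\geq\Big(\frac{s_0(t)}{s_0(s)}\Big)^{-n/2}e^{-c_{s,t}(x,y)+\frac12 U(y)-\frac12 U(x)}.
\]

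The routine parts are the chain rule and the completion of the square; the part needing care is justifying that~(\ref{scalar}) holds at every point of the path with the stated $s_0$ (this is supplied by Theorem~\ref{main}, so it is already available), and that the integration and the passage to the infimum are legitimate. The latter requires that $\tau\mapsto f(\tau,\gamma(\tau))$ is absolutely continuous and that the growth conditions~(\ref{growth}) guarantee the integrand $\tfrac14|u|^2-h(\gamma)$ is integrable and bounded below along near-optimal paths, so the infimum defining $c_{s,t}$ is attained in the limit and finite; one also uses the controllability statement of section~4 so that the class of admissible paths is nonempty. I expect the main obstacle to be purely technical: ensuring regularity of $f$ along piecewise constant-control trajectories near $t=0$ (where $s_0(s)\to0$), and confirming that the infimum in the definition of $c_{s,t}$ interacts correctly with the integrated inequality — but no new idea beyond Theorem~\ref{main} and the square completion is needed.
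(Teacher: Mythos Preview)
Your proof is correct and follows essentially the same approach as the paper: integrate the scalar differential Harnack inequality~(\ref{scalar}) along an admissible trajectory of the control system~(\ref{spcontrol}), use the bound $|\nabla_v f|^2+\langle u,\nabla_v f\rangle\geq -\tfrac14|u|^2$, invoke the exact-time controllability coming from~\cite{JuKu,JuSu} to ensure admissible paths exist, and take the infimum over controls. The only cosmetic difference is that the paper phrases your completion-of-the-square step as computing a pointwise infimum over covectors, which is algebraically equivalent.
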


\smallskip

\section{Matrix Riccati equations}

In this section, we discuss two lemmas concerning solutions of a matrix Riccati equation which will be used in the proof of Theorem \ref{main}. First, we have 

\begin{lem}\label{RiccatiExist}
Let 
\[
C=\left(\begin{array}{cc}
O & -I\\
O & O
\end{array}\right),\quad D=\left(\begin{array}{cc}
O & O\\
O & 2I
\end{array}\right). 
\]
Assume that $K$ is a non-negative definite. Then there is a solution of the equation 
\begin{equation}\label{Riccati}
\dot N=NC+C^TN+NDN-K
\end{equation}
satisfying $\lim_{t\to 0}N^{-1}=0$, $N\leq0$, and $\dot N\geq 0$. 
\end{lem}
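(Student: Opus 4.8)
The plan is to construct the desired solution of the matrix Riccati equation (\ref{Riccati}) explicitly via the standard linearization of Riccati equations, and then to extract the qualitative properties ($N \le 0$, $\dot N \ge 0$, $\lim_{t\to 0} N^{-1} = 0$) from the structure of the linear Hamiltonian system. First I would pass to the associated linear system: if $N = PQ^{-1}$ for matrix-valued functions $P, Q$ solving the linear ODE
\[
\dot P = P C - K Q, \qquad \dot Q = -C^T Q - D P,
\]
then a direct computation shows $N$ satisfies (\ref{Riccati}) wherever $Q$ is invertible. To get the boundary behavior $\lim_{t\to 0} N^{-1} = Q P^{-1} \to 0$, I would choose the initial conditions $P(0) = I$, $Q(0) = O$; then near $t = 0$ one has $Q(t) \approx -t\,(C^T Q(0) + D P(0)) \cdot$(correction), more precisely $Q(t) = -tD + O(t^2)$ is degenerate since $D$ is only rank $n$, so a little more care is needed — one checks that $N^{-1} = Q P^{-1}$ is well-defined and $\to 0$ by examining the block structure (the $v$-block of $Q$ behaves like $-2tI$, and the Kolmogorov-type coupling feeds the $x$-block at order $t^2$ or $t^3$), exactly as in the classical fundamental-solution computation for (\ref{KFKsim}).

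Next I would establish $N \le 0$ and $\dot N \ge 0$. For $\dot N \ge 0$: differentiate (\ref{Riccati}) to get a linear ODE for $\dot N$ of the form $\frac{d}{dt}\dot N = \dot N (C + DN) + (C + DN)^T \dot N - 0$ (the $-K$ term drops), which is a homogeneous linear Lyapunov-type equation; since $N^{-1} \to 0$ forces $\dot N \to +\infty$ in a suitable sense as $t \to 0^+$ (equivalently $\dot N$ is positive for small $t$, because $N$ is increasing out of the "boundary at $-\infty$"), the comparison/uniqueness for this linear equation propagates $\dot N \ge 0$ for all $t > 0$. For $N \le 0$: from $\dot N \ge 0$ and a boundary analysis showing $N(t) \to$ a non-positive (indeed $-\infty$ in the invertible directions) matrix as $t \to 0^+$, together with the fact that $N$ cannot cross $0$ — at a first time where a direction of $N$ hits $0$ one reads off from (\ref{Riccati}) that $\dot N$ in that direction is $\le -K \le 0$, contradicting monotonicity unless $K$ annihilates that direction, in which case one argues separately that that direction stays at a constant $\le 0$ value. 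Alternatively, and more cleanly, I would represent $-N$ as the value of a linear-quadratic optimal control problem (a $\lim$ of finite-horizon Riccati solutions with terminal cost $+\infty$), which makes $-N \ge 0$ automatic; the monotonicity $\dot N \ge 0$ then also follows from the monotonicity of the value function in the horizon.

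The main obstacle I anticipate is the boundary analysis at $t = 0$: because $D = \mathrm{diag}(O, 2I)$ is degenerate, the linearized flow does not simply give $Q(t) \sim -tD$, and the claim $\lim_{t\to 0} N^{-1} = 0$ requires tracking how the nilpotent matrix $C$ (the drift $-v\partial_x$) transfers invertibility from the $v$-block into the $x$-block at higher order in $t$ — this is precisely the hypoellipticity/Hörmander-bracket phenomenon at the linear-algebra level, and it is what makes $N^{-1}$ well-defined and vanishing despite $K$ possibly being zero. A secondary technical point is handling the case where $K$ is merely non-negative definite (not strictly positive), so that some directions of the Riccati flow are "free"; there one must check that the constructed $N$ still satisfies all three conclusions, which I would do by a perturbation argument ($K \rightsquigarrow K + \e I$, solve, then let $\e \to 0$ using continuous dependence and the closedness of the conditions $N \le 0$, $\dot N \ge 0$). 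Once the explicit $P, Q$ are in hand, the rest is a routine verification, and the explicit trigonometric/hyperbolic formulas for $s_0, s_1, s_2$ in Theorem \ref{main} will emerge from diagonalizing the constant-coefficient linear system with $K$ taken to be the block-diagonal matrix in (\ref{lb}).
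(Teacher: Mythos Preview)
Your approach is workable but differs substantially from the paper's, and is more laborious than necessary. The paper's key simplification is to pass immediately to the inverse $S := N^{-1}$, which satisfies the Riccati equation
\[
\dot S = -CS - SC^T - D + SKS
\]
with the \emph{regular} initial condition $S(0) = 0$. This converts the singular boundary data $\lim_{t\to 0} N^{-1} = 0$ into a standard initial-value problem, so there is no need to track the linear pair $(P,Q)$ or to worry about $N$ blowing up. The paper then computes the Taylor expansion of $S$ to order $t^4$ to verify $S < 0$ for small $t>0$ (this is exactly the hypoellipticity/H\"ormander phenomenon you flag, but organized as a single expansion rather than a block-by-block analysis of $Q$), and invokes Royden's comparison theorem for matrix Riccati equations \cite{Ro} twice: once to get $S \le 0$ for all $t$, and once applied to the differentiated equation
\[
\ddot S = -C\dot S - \dot S C^T + \dot S KS + SK\dot S, \qquad \dot S(0) = -D \le 0,
\]
to get $\dot S \le 0$, which is equivalent to $\dot N \ge 0$ since $N$ is negative definite.

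What your route buys is self-containment: you avoid citing the external comparison theorem, and the linearization $N = PQ^{-1}$ (which the paper saves for the explicit formulas in Lemma~\ref{explicit}) together with an LQ variational interpretation would in principle yield the sign and monotonicity directly. What the paper's route buys is brevity and a clean handling of the boundary: working with $S$ rather than $N$ eliminates the singular ``$N\to -\infty$'' analysis entirely, and the comparison theorem dispatches both inequalities in one line each, with no case distinction on the rank of $K$. Note also that your first-crossing argument for $N\le 0$ is incomplete as stated (the clause ``that direction stays at a constant $\le 0$ value'' does not follow from the equation when $K$ annihilates a direction, since the coupling through $C$ and $NDN$ still acts there); the LQ representation you mention as an alternative is the sounder of your two options.
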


\begin{proof}
If $N$ is a solution the equation (\ref{Riccati}), then $S:=N^{-1}$ is a solution to the equation 
\begin{equation}\label{Riccati2}
\dot S=-CS-SC^T-D+SKS. 
\end{equation}
Therefore, it is enough to show that the solution $S$ of the equation (\ref{Riccati2}) with $S(0)=0$ satisfies $S\leq 0$ and $\dot S\leq 0$. 

By expanding $S$ near $t=0$, we get that 
\[
S=\left(\begin{array}{cc}
-\frac{2t^3}{3}I & -t^2I\\
-t^2I & -2tI+\frac{4t^3}{3}k_2
\end{array}\right)+O(t^4)
\]
as $t\to 0$, where $K=\left(\begin{array}{cc}
K_1 & K_2\\
K_2^T & k_2
\end{array}\right)$. Since the eigenvalues of the matrix $\left(\begin{array}{cc}
-\frac{2t^3}{3}I & -t^2I\\
-t^2I & -2tI+\frac{4t^3}{3}k_2
\end{array}\right)$ are given by 
\[
-\left(\frac{t^3}{3}+t-\frac{2t^3\lambda_i}{3}\right)\pm\sqrt{\left(\frac{t^3}{3}+t-\frac{2t^3\lambda_i}{3}\right)^2-\frac{t^4}{3}+\frac{8t^6}{9}\lambda_i} 
\]
which is negative if $t>0$ is sufficiently small. Here $\lambda_i$ is any eigenvalue of $k_2$. Therefore, $S$ is negative definite for $t>0$ sufficiently small. 

The rest follows from the following comparison theorem of Riccati equation in \cite{Ro}. 

\begin{thm}\label{compare}
Let $S_i$, $i=1, 2$ be solutions of the equations
\[
\dot S_i=A_i+B_iS_i+S_iB_i^T+S_iC_iS_i
\]
with initial conditions $S_1(0)\leq S_2(0)$. Assume that 
\[
\left(\begin{array}{cc}
A_1 & B_1\\
B_1^T & C_1
\end{array}\right)\leq \left(\begin{array}{cc}
A_2 & B_2\\
B_2^T & C_2
\end{array}\right). 
\]
Then $S_1(t)\leq S_2(t)$ for all $t\geq 0$. 
\end{thm}

It follows immediately from Theorem \ref{compare} that $S\leq 0$. By (\ref{Riccati2}), 
\[
\ddot S=-C\dot S-\dot SC^T+\dot SKS+SK\dot S
\]
and $\dot S(0)=-D$. It follows from Theorem \ref{compare} that $\dot S\leq 0$. 
\end{proof}

Next, we write down explicitly the solution of the equation (\ref{Riccati}) in the case when $K=\left(\begin{array}{cc}
k_1I & O\\
O & k_2I
\end{array}\right)$, where $k_1$ and $k_2$ are non-negative. 

\begin{lem}\label{explicit}
Let $k_1$ and $k_2$ be two non-negative number. Let 
\[
S=\frac{1}{2}\left(\begin{array}{cc}
-\frac{s_1}{s_0}I & \frac{s_2}{s_0}I\\
\frac{s_2}{s_0}I & -\frac{\dot s_0}{s_0}I
\end{array}\right), 
\]
where $s_0$, $s_1$, and $s_2$ are defined as in Theorem \ref{main}. 

Then $S$ is a solution of the equation (\ref{Riccati}) with $K=\left(\begin{array}{cc}
k_1I & O\\
O & k_2I
\end{array}\right)$ satisfying $\lim_{t\to 0}S^{-1}(t)=0$. 
\end{lem}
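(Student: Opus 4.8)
The plan is to verify directly that the stated matrix $S$ solves the Riccati equation \eqref{Riccati} in each of the five cases, since we are handed explicit formulas and there is nothing to discover---only a computation to organize cleanly. Writing $S=\frac12\begin{pmatrix} a I & b I\\ b I & c I\end{pmatrix}$ with $a=-s_1/s_0$, $b=s_2/s_0$, $c=-\dot s_0/s_0$, the matrix equation \eqref{Riccati} collapses (because every block is a scalar multiple of $I$ and $C$, $D$ have the displayed block form) to a system of three scalar ODEs in $a,b,c$. Concretely, $SC+C^TS$ contributes the off-diagonal shift coming from the $-I$ block of $C$, $SDN$-type term $SDS$ contributes $2$ times the appropriate products of $a,b,c$, and $-K=-\mathrm{diag}(k_1I,k_2I)$ contributes the constants. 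So the claim reduces to checking
\[
\dot a = b^2 - k_1,\qquad \dot b = \tfrac12 a + \tfrac12 bc \;(\text{up to the exact constants}),\qquad \dot c = -b + \tfrac12 c^2 - k_2,
\]
with the precise coefficients read off from $C$ and $D$; I would first derive this scalar system carefully once and for all, then substitute.

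The cleanest way to handle the substitution is to recognize that all the $s_i$ are built from the scalar fourth-order constant-coefficient linear ODE whose characteristic roots are exactly the $\pm\lambda_1,\pm\lambda_2$ (resp. the confluent or complex analogues $\mu_1,\mu_2$). Indeed the Riccati equation for $N=S^{-1}$ linearizes: setting up the Hamiltonian matrix $\mathcal H=\begin{pmatrix} C & D\\ -K & -C^T\end{pmatrix}$, the solution with $\lim_{t\to0}S^{-1}=0$ corresponds to $S(t) = X(t)Y(t)^{-1}$ where $\begin{pmatrix}X\\Y\end{pmatrix}$ solves $\tfrac{d}{dt}\begin{pmatrix}X\\Y\end{pmatrix}=\mathcal H\begin{pmatrix}X\\Y\end{pmatrix}$ with $X(0)=0$. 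Because $K=\mathrm{diag}(k_1I,k_2I)$, $\mathcal H$ block-diagonalizes into $n$ identical $4\times4$ blocks, and the characteristic polynomial of that $4\times4$ block is $\mu^4 - 2k_2\mu^2 + 2k_1$ (one checks this from the explicit $C,D,K$), whose roots are precisely $\pm\lambda_1,\pm\lambda_2$ in case (1), with the degenerate/complex cases (2)--(5) being the coalescence $k_2^2=2k_1$, the complex regime $k_2^2<2k_1$, and the boundary $k_1=0$. Thus $s_0$ (a combination of $\sinh\lambda_i t$) is essentially a $2\times2$ minor of the fundamental matrix, and $s_1,s_2,\dot s_0$ are the remaining entries; identifying them with the matrix entries of $X Y^{-1}$ gives the result with no case-by-case miracle.

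Accordingly the steps are: (i) reduce \eqref{Riccati} with the given ansatz to the scalar triple of ODEs for $a,b,c$; (ii) linearize via $N=S^{-1}$ and the Hamiltonian $\mathcal H$, compute its characteristic polynomial $\mu^4-2k_2\mu^2+2k_1$, and solve the fundamental system with $X(0)=0$; (iii) read off the entries and match them to $s_0,s_1,s_2$ as defined in Theorem~\ref{main}, checking in each of the five cases that the closed forms agree (the confluent case $k_2^2=2k_1$ needs the $\sinh\lambda t, t\cosh\lambda t$ basis, the $k_2^2<2k_1$ case splits $\mu^2$ into complex conjugates giving the $\sin(\mu_2 t/\sqrt2)$ terms, the $k_1=0$ case factors out a root at $0$, and $k_1=k_2=0$ is the polynomial case with quadruple root $0$, giving $1,t,t^2,t^3$ and $s_0=t^4$); and (iv) verify $\lim_{t\to0}S^{-1}=0$, which is immediate since $X(0)=0$ forces $S(t)=X(t)Y(t)^{-1}\to0$, equivalently $S^{-1}=YX^{-1}$ blows up, i.e.\ $S^{-1}\to0$ is read as $S\to0$---actually the condition $\lim_{t\to0}S^{-1}=0$ means $S$ itself is singular/large; here it follows because $s_0(t)\to0$ like $t^{3}$ or $t^{4}$ while $s_1,s_2,\dot s_0$ vanish to lower order, so $S\sim \mathrm{diag}$ of large entries, hence $S^{-1}\to0$. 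The main obstacle is purely bookkeeping: keeping the five parameter regimes straight and matching trigonometric identities (e.g.\ turning products of $\cosh\lambda_1 t\cosh\lambda_2 t$ into the form written for $s_2$), together with getting the factors of $\tfrac12$, $2$ from $D$, and the $\lambda_1\lambda_2$ normalizations exactly right so that $s_0,s_1,s_2$ match the theorem's normalization rather than a scalar multiple of it.
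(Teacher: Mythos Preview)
Your approach is essentially the same as the paper's: the paper also linearizes the Riccati equation via the associated Hamiltonian system (quoting this as a theorem of Levin), diagonalizes the coefficient matrix $\begin{pmatrix} C^T & -K\\ -D & -C\end{pmatrix}$ in each of the five parameter regimes, writes the fundamental matrix $M$ explicitly, and then reads off $S=M_1M_3^{-1}$; your observation that the $4n\times4n$ system splits into $n$ identical $4\times4$ blocks with characteristic polynomial $\mu^4-2k_2\mu^2+2k_1$ is exactly what drives the paper's case split. One small slip to fix when you execute: with your convention $S=XY^{-1}$ and $X(0)=0$ you would get $S(0)=0$, not $S^{-1}(0)=0$; the condition $\lim_{t\to0}S^{-1}=0$ corresponds instead to the \emph{denominator} block vanishing at $t=0$ (equivalently $S=M_1M_3^{-1}$ in the paper's notation), which is consistent with your later remark that $s_0\to0$ forces $S$ to blow up.
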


\begin{proof}

The proof relies on the following formula which appeared in \cite{Le}. 

\begin{thm}\label{compute}
Let $M=\left(\begin{array}{cc}
M_1 & M_2\\
M_3 & M_4
\end{array}\right)$ be the fundamental solution of the following system of differential equations
\begin{equation}\label{ode}
\dot M=\left(\begin{array}{cc}
B_1 & B_2\\
-B_3 & -B_1^T 
\end{array}\right)M
\end{equation}
Then $S=(M_1S(0)+M_2)(M_3S(0)+M_4)^{-1}$ is a solution to the matrix Riccati equation $\dot S=B_2+B_1S+SB_1^T+SB_3S$. 
\end{thm}

In our present case, the equation (\ref{ode}) is given by 
\[
\dot M=\left(\begin{array}{cc}
C^T & -K\\
-D & -C 
\end{array}\right)M
\]

First, assume that $k_2^2>2k_1>0$. A computation shows that the eigenvalues of the matrix $\left(\begin{array}{cc}
C^T & -K\\
-D & -C 
\end{array}\right)$ are given by $\lambda_1$, $-\lambda_1$, $\lambda_2$, and $-\lambda_2$. Another computation shows that 
\[
U^{-1}\left(\begin{array}{cc}
C^T & -K\\
-D & -C 
\end{array}\right)U=\left(\begin{array}{cccc}
\lambda_1I & O & O & O\\
O & -\lambda_1I & O & O\\
O & O & \lambda_2I & O\\
O & O & O & -\lambda_2I
\end{array}\right). 
\]
where 
\[
U=\left(\begin{array}{cccc}
-\frac{k_1}{\lambda_1^2}I & -\frac{k_1}{\lambda_1^2}I & -\frac{k_1}{\lambda_2^2}I & -\frac{k_1}{\lambda_2^2}I\\
\frac{k_1-\lambda_1^2k_2}{\lambda_1^3}I & -\frac{k_1-\lambda_1^2k_2}{\lambda_1^3}I & \frac{k_1-\lambda_2^2k_2}{\lambda_2^3}I & -\frac{k_1-\lambda_2^2k_2}{\lambda_2^3}I\\
\frac{1}{\lambda_1}I & -\frac{1}{\lambda_1}I & \frac{1}{\lambda_2}I & -\frac{1}{\lambda_2}I\\
I & I & I & I
\end{array}\right)
\]
and 
\[
U^{-1}=\left(\begin{array}{cccc}
-\frac{k_1}{\lambda_1^2}I & -\frac{k_1}{\lambda_1^2}I & -\frac{k_1}{\lambda_2^2}I & -\frac{k_1}{\lambda_2^2}I\\
\frac{k_1-\lambda_1^2k_2}{\lambda_1^3}I & -\frac{k_1-\lambda_1^2k_2}{\lambda_1^3}I & \frac{k_1-\lambda_2^2k_2}{\lambda_2^3}I & -\frac{k_1-\lambda_2^2k_2}{\lambda_2^3}I\\
\frac{1}{\lambda_1}I & -\frac{1}{\lambda_1}I & \frac{1}{\lambda_2}I & -\frac{1}{\lambda_2}I\\
I & I & I & I
\end{array}\right). 
\]

Therefore, 
\[
M=U\left(\begin{array}{cccc}
e^{\lambda_1t}I & O & O & O\\
O & e^{-\lambda_1t}I & O & O\\
O & O & e^{\lambda_2t}I & O\\
O & O & O & e^{-\lambda_2t}I
\end{array}\right)U^{-1}.
\]
By Theorem \ref{compute} and $\lim_{t\to 0}S(t)^{-1}=0$, $S=M_1M_3^{-1}$. Another computation gives the result in this case. 

In the case $k_2^2=2k_1>0$, the eigenvalues are given by $\pm\sqrt{k_2}$. A computation shows that 
\[
U^{-1}\left(\begin{array}{cc}
C^T & -K\\
-D & -C 
\end{array}\right)U=\left(\begin{array}{cccc}
\sqrt{k_2}I & I & O & O\\
O & \sqrt{k_2}I & O & O\\
O & O & -\sqrt{k_2}I & I\\
O & O & O & -\sqrt{k_2}I
\end{array}\right). 
\]
where 
\[
U=\left(\begin{array}{cccc}
\frac{k_2^2}{2}I & -k_2^{3/2}I & \frac{k_2^2}{2}I & k_2^{3/2}I\\
\frac{k_2^{3/2}}{2}I & \frac{k_2}{2}I & -\frac{k_2^{3/2}}{2}I & \frac{k_2}{2}I\\
-\sqrt{k_2}I & I & \sqrt{k_2}I & I\\
-k_2I & O & -k_2I & O
\end{array}\right)
\]
and 
\[
U^{-1}=\left(\begin{array}{cccc}
O & \frac{1}{2k_2^{3/2}}I & -\frac{1}{4k_2^{1/2}}I & -\frac{1}{2k_2}I\\
-\frac{1}{2k_2^{3/2}}I & \frac{1}{2k_2}I & \frac{1}{4}I & -\frac{1}{4k_2^{1/2}}I\\
O & -\frac{1}{2k_2^{3/2}}I & \frac{1}{4k_2^{1/2}}I & -\frac{1}{2k_2}I\\
\frac{1}{2k_2^{3/2}}I & \frac{1}{2k_2}I & \frac{1}{4}I & \frac{1}{4k_2^{1/2}}I
\end{array}\right). 
\]
Therefore, 
\[
M=U\left(\begin{array}{cccc}
e^{\sqrt{k_2}t}I & te^{\sqrt{k_2}t}I & O & O\\
O & e^{\sqrt{k_2}t}I & O & O\\
O & O & e^{-\sqrt{k_2}t}I & te^{-\sqrt{k_2}t}I\\
O & O & O & e^{-\sqrt{k_2}t}I
\end{array}\right)U^{-1}.
\]
Therefore, by using $S=M_1M_3^{-1}$, the result in this case follows. 

Next, assume that $k_2^2<2k_1$. The characteristic polynomial of the matrix $\left(\begin{array}{cc}
C^T & -K\\
-D & -C 
\end{array}\right)$ is given by $(x^2+\sqrt 2 \mu_1 x+\sqrt{2k_1})(x^2-\sqrt 2\mu_1 x+\sqrt{2k_1})$, where $\mu_1=\sqrt{\sqrt{2k_1}+k_2}$. A computation shows that 
\[
U^{-1}\left(\begin{array}{cc}
C^T & -K\\
-D & -C 
\end{array}\right)U=\left(\begin{array}{cccc}
-\sqrt 2 \mu_1 I & I & O & O\\
-\sqrt{2k_1}I & O & O & O\\
O & O & \sqrt 2\mu_1 I & I\\
O & O & -\sqrt{2k_1}I & O
\end{array}\right). 
\]
where 
\[
U=\left(\begin{array}{cccc}
-k_1I & \sqrt{k_1}\mu_1 I & -k_1I & -\sqrt{k_1}\mu_1 I\\
-\sqrt{k_1}\mu_1 I & \sqrt{\frac{k_2}{2}}I & \sqrt{k_1}\mu_1 I & \sqrt{\frac{k_2}{2}}I\\
O & I & O & I\\
-\sqrt{2k_1}I & O & -\sqrt{2k_1}I & O
\end{array}\right)
\]
and 
\[
U^{-1}=\left(\begin{array}{cccc}
O & -\frac{1}{2\sqrt{k_1}\mu_1}I & \frac{1}{2\sqrt{2}\mu_1} & -\frac{1}{2\sqrt{2k_1}}I\\
\frac{1}{2\sqrt{k_1}\mu}I & O & \frac{1}{2}I & -\frac{1}{2\sqrt{2}\mu_1}I\\
O & \frac{1}{2\sqrt{k_1}\mu_1}I & -\frac{1}{2\sqrt{2}\mu_1}I & -\frac{1}{2\sqrt{2k_1}}I\\
-\frac{1}{2\sqrt{k_1}\mu_1}I & O & \frac{1}{2}I & \frac{1}{2\sqrt{2}\mu_1}I
\end{array}\right). 
\]

Therefore, 
\[
M=U\left(\begin{array}{cc}
A_1 & O\\
O & A_2
\end{array}\right)U^{-1},
\]
where $\mu_2=\sqrt{\sqrt{2k_1}-k_2}$, 
\[
A_1=e^{-\frac{\mu_1t}{\sqrt{2}}}\left(\begin{array}{cc}
\left(\cos\left(\frac{\mu_2}{\sqrt 2}t\right) -\frac{\mu_1}{\mu_2}\sin\left(\frac{\mu_2}{\sqrt 2}t\right) \right)I & \frac{\sqrt 2}{\mu_2}\sin\left(\frac{\mu_2}{\sqrt 2}t\right)I\\
-\frac{ 2\sqrt{k_1}}{\mu_2}\sin\left(\frac{\mu_2}{\sqrt 2}t\right)I & \left(\cos\left(\frac{\mu_2}{\sqrt 2}t\right) +\frac{\mu_1}{\mu_2}\sin\left(\frac{\mu_2}{\sqrt 2}t\right) \right)I\\
\end{array}\right),
\]
and 
\[
A_2=e^{\frac{\mu_1t}{\sqrt{2}}}\left(\begin{array}{cc}
\left(\cos\left(\frac{\mu_2}{\sqrt 2}t\right) +\frac{\mu_1}{\mu_2}\sin\left(\frac{\mu_2}{\sqrt 2}t\right) \right)I & \frac{\sqrt 2}{\mu_2}\sin\left(\frac{\mu_2}{\sqrt 2}t\right)I\\
-\frac{ 2\sqrt{k_1}}{\mu_2}\sin\left(\frac{\mu_2}{\sqrt 2}t\right)I & \left(\cos\left(\frac{\mu_2}{\sqrt 2}t\right) -\frac{\mu_1}{\mu_2}\sin\left(\frac{\mu_2}{\sqrt 2}t\right) \right)I\\
\end{array}\right). 
\]

If we assume that $k_2^2>2k_1=0$, then computation shows that the eigenvalues of the matrix $\left(\begin{array}{cc}
C^T & -K\\
-D & -C 
\end{array}\right)$ are given by $0$ and $\pm\sqrt {2k_2}$. Another computation shows that 
\[
U^{-1}\left(\begin{array}{cc}
C^T & -K\\
-D & -C 
\end{array}\right)U=\left(\begin{array}{cccc}
O & I & O & O\\
O & O & O & O\\
O & O & \sqrt{2k_2}I & O\\
O & O & O & -\sqrt{2k_2}I
\end{array}\right). 
\]
where 
\[
U=\left(\begin{array}{cccc}
O & -k_2I & O & O\\
O & O & -\sqrt{\frac{k_2}{2}}I & \sqrt{\frac{k_2}{2}}I\\
I & O & \frac{1}{\sqrt{2k_2}}I & -\frac{1}{\sqrt{2k_2}}I\\
O & I & I & I
\end{array}\right)
\]
and 
\[
U^{-1}=\left(\begin{array}{cccc}
O & \frac{1}{k_2}I & I & O\\
-\frac{1}{k_2}I & O & O & O\\
\frac{1}{2k_2}I & -\frac{1}{\sqrt{2k_2}}I & O & \frac{1}{2}I\\
\frac{1}{2k_2}I & \frac{1}{\sqrt{2k_2}}I & O & \frac{1}{2}I\\
\end{array}\right). 
\]

Therefore, 
\[
M=U\left(\begin{array}{cccc}
I & tI & O & O\\
O & I & O & O\\
O & O & e^{\sqrt{2k_2}t}I & O\\
O & O & O & e^{-\sqrt{2k_2}t}I
\end{array}\right)U^{-1}.
\]

If we assume that $k_2=2k_1=0$, then computation shows that 0 is the only eigenvalue of the matrix $\left(\begin{array}{cc}
C^T & -K\\
-D & -C 
\end{array}\right)$. Another computation shows that 
\[
M=\left(\begin{array}{cccc}
I & O & O & O\\
-tI & I & O & O\\
\frac{1}{3}t^3I & -t^2I & I & tI\\
t^2I & -2tI & O & I
\end{array}\right).
\]
\end{proof}

\smallskip

\section{The matrix differential Harnack inequalities}

In this section, we give the proof of Theorem \ref{main}. It follows immediately from Lemma \ref{explicit} and the following theorem. 

\begin{thm}\label{matrixmain}
Let $\rho$ be a non-negative solution of the equation (\ref{hypo}) satisfying the growth conditions (\ref{growth}). Assume that the function $h$ defined by (\ref{h}) satisfies $\nabla^2 h\geq -K$ for some non-negative definite matrix $K$. Then 
\[
\nabla^2 \log\rho-\frac{1}{2}\nabla^2 U\geq N
\]
where $N$ is a solution of the matrix Riccati equation 
\[
\dot N=NC+C^TN+NDN-K
\]
satisfying the condition $\lim_{t\to 0}N^{-1}=0$. 
\end{thm}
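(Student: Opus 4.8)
The plan is to prove Theorem \ref{matrixmain} by a parabolic maximum principle argument applied to the ``error'' tensor $E := \nabla^2\log\rho - \tfrac12\nabla^2 U - N$, where $N$ solves the matrix Riccati equation in Lemma \ref{RiccatiExist}. First I would rewrite the equation (\ref{hypo}) for $\rho$ in terms of $f := \log\rho - \tfrac12 U$; a direct computation (using the definition (\ref{h}) of $h$) shows $f$ satisfies a semilinear hypoelliptic equation of the form $\dot f = \Delta_v f + |\nabla_v f|^2 - \langle v,\nabla_x f\rangle + h$. Then I would differentiate this equation twice in the full $(x,v)$ variables to obtain an evolution equation for the Hessian $A := \nabla^2 f$. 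The key structural point is that the drift terms $-\langle v,\nabla_x f\rangle$ and $|\nabla_v f|^2$, after two differentiations, contribute exactly the algebraic terms $AC + C^T A$ and $ADA$ (with $C,D$ as in Lemma \ref{RiccatiExist}), up to first-order transport terms; the term $h$ contributes $\nabla^2 h \geq -K$. This is the computation that makes the matrix Riccati equation appear, and getting the bookkeeping right — especially the cross terms mixing $\nabla_x$ and $\nabla_v$ derivatives and the contribution of $\nabla^2 h$ in the correct block form — is the heart of the argument.

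Next I would set up the maximum principle. The difference $E = A - \tfrac12\nabla^2 U - N$ satisfies (schematically) $\dot E \geq \Delta_v E + (\text{first order transport in } E) + (\text{linear zeroth order terms in } E \text{ coming from the } ADA - NDN \text{ and } AC+C^TA-NC-C^TN \text{ differences}) + (\nabla^2 h + K)$, where the last term is $\geq 0$ by hypothesis. The zeroth-order terms are of the form $E D A + N D E + EC + C^T E$, which vanish when $E=0$; combined with the $\Delta_v$-diffusion and the transport, this is precisely the structure for which a tensor maximum principle applies. The initial condition is handled by $\lim_{t\to0}N^{-1}=0$: as $t\to 0^+$, $N\to -\infty$ (more precisely $N^{-1}\to 0$ with $N\leq 0$), so $E = A - \tfrac12\nabla^2 U - N \geq 0$ holds automatically near $t=0$ regardless of the (bounded-below, by (\ref{growth})) behavior of $\nabla^2\log\rho - \tfrac12\nabla^2 U$. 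Thus the maximum principle propagates $E\geq 0$ for all $t>0$.

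The main obstacle is that $\Real^n\times\Real^n$ is non-compact and the operator is only hypoelliptic, not uniformly parabolic, so the maximum principle is not the textbook one: one must justify it using the growth conditions (\ref{growth}). I would handle this in the standard way — multiply $E$ by a suitable exponential cutoff weight adapted to the growth rate $e^{k_1(|x|^2+|v|^2)}$ appearing in (\ref{growth}), or localize with a sequence of spatial cutoffs and control the error terms using the bounds on $|\nabla\rho|$, $\nabla^2\rho$, and $\nabla^2 U$; then take a limit. The hypoellipticity means $\rho$ is smooth for $t>0$, so there is no regularity issue, only a decay/growth issue at spatial infinity. A secondary technical point is ensuring that the minimum eigenvalue of the (symmetric matrix-valued) $E$ is a legitimate test object for the maximum principle, i.e.\ applying Hamilton's tensor maximum principle in this hypoelliptic, weighted setting; this requires checking that the null eigenvector direction is not obstructed by the degeneracy of the diffusion (the diffusion acts only in $v$, but the transport term $AC+C^TA$ moves information from the $v$-block to the $x$-block, which is what ultimately closes the argument).

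\begin{rem}
Once Theorem \ref{matrixmain} is established, Theorem \ref{main} follows immediately: Lemma \ref{explicit} identifies the particular solution $N = S$ of the Riccati equation for the diagonal choice $K = \mathrm{diag}(k_1 I, k_2 I)$ with $\lim_{t\to0}N^{-1}=0$, and the hypothesis (\ref{lb}) is exactly $\nabla^2 h \geq -K$ for that $K$, so the five cases of Theorem \ref{main} are just the five explicit forms of $S$ recorded in Lemma \ref{explicit}.
\end{rem}
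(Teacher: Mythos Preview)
Your proposal is correct and follows essentially the same approach as the paper: derive the semilinear equation for $g=\log\rho-\tfrac12 U$, differentiate twice to obtain $\dot M=\Delta_vM+\langle 2\nabla_vg,\nabla_vM\rangle-\langle v,\nabla_xM\rangle+MC+C^TM+MDM+\nabla^2h$, compare with the Riccati solution $N$, and apply a tensor maximum principle with the initial condition supplied by $N\to-\infty$. Two small points: your second expression $E=A-\tfrac12\nabla^2U-N$ is a slip (since $A=\nabla^2 f$ already equals $\nabla^2\log\rho-\tfrac12\nabla^2U$, you want $E=A-N$); and for the non-compactness issue the paper uses an \emph{additive} barrier $M_\e=M-N+\e\phi I$ with $\log\phi$ quadratic in $(x,v)$ and time-dependent coefficients blowing up at a finite $T$ (after first replacing $\rho$ by $\rho+\delta$), rather than a multiplicative weight or cutoff sequence.
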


\begin{proof}
By replacing $\rho$ by $\rho+\delta$, we can assume that $\rho$ is bounded below by a positive constant. 
Let $f=\log\rho$. Then a computation shows that 
\begin{equation}\label{f}
\dot f=\Delta_v f+\left|\nabla_vf-\frac{1}{2}\nabla_vU\right|^2-\left<v,\nabla_xf\right>-\frac{1}{4}|\nabla_v U|^2 
\end{equation}

Let $g=f-\frac{1}{2}U$. Then 
\begin{equation}\label{g}
\dot g=\Delta_v g+\left|\nabla_vg\right|^2-\left<v,\nabla_xg\right> +h. 
\end{equation}

By differentiating the above equation, we obtain 
\[
\begin{split}
&\dot g_{x_ix_j}=\Delta_v g_{x_ix_j}+2\left<\nabla_v g_{x_j},\nabla_vg_{x_i}\right>\\
&+2\left<\nabla_v g,\nabla_vg_{x_ix_j}\right>-\left<v,\nabla_xg_{x_ix_j}\right> +h_{x_ix_j}, 
\end{split}
\]
\[
\begin{split}
&\dot g_{x_iv_j}=\Delta_v g_{x_iv_j}+2\left<\nabla_v g_{v_j},\nabla_vg_{x_i}\right>\\
&+2\left<\nabla_v g,\nabla_vg_{x_iv_j}\right>-\left<v,\nabla_xg_{x_iv_j}\right> -g_{x_ix_j} +h_{x_iv_j}, 
\end{split}
\]
and 
\[
\begin{split}
&\dot g_{v_iv_j}=\Delta_v g_{v_iv_j}+2\left<\nabla_v g_{v_j},\nabla_vg_{v_i}\right>\\
&+2\left<\nabla_v g,\nabla_vg_{v_iv_j}\right>-g_{x_iv_j}-g_{v_ix_j}-\left<v,\nabla_x g_{v_iv_j}\right> +h_{v_iv_j}. 
\end{split}
\]

Let $M$ be the matrix defined by $M=\left(\begin{array}{cc}
g_{xx} & g_{xv}\\
g_{xv}^T & g_{vv}
\end{array}\right)$, where $g_{xx}$, $g_{xv}$, and $g_{vv}$ are $n\times n$ matrices with $ij$-th entry equal to $g_{x_ix_j}$, $g_{x_iv_j}$, and $g_{v_iv_j}$, respectively. It follows from the above equations that  
\[
\dot M=\Delta_v M+\left<2\nabla_vg,\nabla_v M\right>-\left<v,\nabla_x M\right>+MC+C^TM+MDM+\nabla^2 h
\]
where $C=\left(\begin{array}{cc}
O & -I\\
O & O
\end{array}\right)$ and $D=\left(\begin{array}{cc}
O & O\\
O & 2I
\end{array}\right)$. 

Let $N$ be the solution of the following matrix Riccati equation given by Lemma \ref{RiccatiExist}
\[
\dot N=NC+C^TN+NDN-K
\]
which satisfies the condition $\lim_{t\to 0}N^{-1}=0$. 

Let $M_\e=M-N+\e \phi I$, where $\phi$ is a positive function on $\Real^n\times\Real^n$ to be chosen. Then, by using $\nabla^2 h\geq -K$, we obtain 
\[
\begin{split}
&\dot M_\e\geq \Delta_v M_\e-\e\Delta_v\phi \, I+\left<2\nabla_vg,\nabla_v M_\e\right>-\e\left<2\nabla_vg,\nabla_v \phi\right>I\\
&-\left<v,\nabla_x M_\e\right>+\e\left<v,\nabla_x \phi\right>I+(M_\e-\e\phi I)C+C^T(M_\e-\e\phi I)\\
&+M_\e DM_\e+M_\e DN-\e\phi M_\e D+NDM_\e-\e\phi ND\\
&-\e\phi DM_\e-\e\phi DN+\e^2\phi^2 D+\e\dot\phi I. 
\end{split}
\]

By assumptions, we have $|\nabla \rho|\leq c_0e^{-\frac{1}{2}U}(|x|+|v|)$, $\nabla^2\rho\geq -k_0e^{k_1(|x|^2+|v|^2)}I$, and $\nabla^2U\leq k_0e^{k_1(|x|^2+|v|^2)}I$. Therefore, 
\[
\begin{split}
&\nabla^2g=\frac{1}{\rho}\nabla^2\rho-\frac{1}{\rho^2}\nabla\rho\otimes\nabla\rho -\frac{1}{2}\nabla^2 U\\
&\geq -\frac{k_0}{\delta}e^{k_1(|x|^2+|v|^2)}I-\frac{1}{\delta^2}|\nabla\rho|^2I -\frac{1}{2}\nabla^2 U\\
&\geq -\left(\frac{1}{\delta}+\frac{1}{2}\right)k_0 e^{k_1(|x|^2+|v|^2)}I-\frac{c_0^2e^{-U}}{\delta^2}(|x|+|v|)^2I. 
\end{split}
\]

Since $\phi$ will be chosen such that $\phi=k_3(t)e^{k_4(t)(|x|^2+|v|^2)}$ with $k_4(t)> k_1$, the function $(x,v,V)\mapsto \left<M_\e(t, x,v)V,V\right>$ achieves its infimum at an interior point $(x,v,V)$ in $\Real^n\times\Real^n\times S^{2n}$ for each fixed $t$. Moreover, $M_\e\geq 0$ for all small enough $t\leq t'$ since $N\to -\infty$ as $t\to 0$. Note that $t'$ depends only on the bound of $M$ only. 

At the first time $t_0$ when $\left<M_\e(x,v) V,V\right>=0$ for some point $(x_0,v_0, V_0)$, we have 
\[
\begin{split}
&0\geq \left<\dot M_\e(x_0,v_0) V_0,V_0\right>\geq -\e\Delta_v\phi -\e\left<2\nabla_vg,\nabla_v \phi\right>\\
&+\e\left<v,\nabla_x \phi\right> -2\left<CV,V\right> -2 \left<NDV,V\right>+\e\dot\phi . 
\end{split}
\]

Let $\varphi=\log\phi$. Then the above inequality becomes 
\[
\begin{split}
&0\geq \dot\varphi-\Delta_v\varphi-|\nabla_v\varphi|^2-\left<2\nabla_vg,\nabla_v \varphi\right>\\
&+\left<v,\nabla_x \varphi\right>-2 \left<CV,V\right> -2 \left<NDV,V\right>. 
\end{split}
\]

Since $\dot N\geq 0$, we can assume that $\left<CV,V\right>+\left<NDV,V\right>\leq c_0$ for $t>t'$ for some $c_0>0$. Therefore, we obtain a contradiction if $\varphi$ satisfies
\[
\begin{split}
&\dot\varphi-\Delta_v\varphi-|\nabla_v\varphi|^2-\left<2\nabla_vg,\nabla_v \varphi\right>+\left<v,\nabla_x \varphi\right>-2c_0>0 
\end{split}
\]

Assume that $\varphi=\frac{a(t)}{2}|x|^2+\frac{b(t)}{2}|v|^2+c(t)$ with $a, b, c\geq 0$. A computation shows that  
\[
\begin{split}
&\dot\varphi-\Delta_v\varphi-|\nabla_v\varphi|^2-\left<2\nabla_vg,\nabla_v \varphi\right>+\left<v,\nabla_x \varphi\right>\\&=\frac{\dot a(t)}{2}|x|^2+\frac{\dot b(t)}{2}|v|^2+\dot c(t)-n b(t)\\
&-b(t)^2|v|^2-2b(t)\left<\nabla_vg,v\right>+a(t)\left<x,v\right>.
\end{split}
\]

By assumption, $|\nabla \rho|\leq c_0e^{-\frac{1}{2}U}(|x|+|v|)\leq \frac{c_0}{\delta}\rho e^{-\frac{1}{2}U}(|x|+|v|)$. Therefore, $|\nabla_v g|$ satisfies $|\nabla_v g|\leq c_1(|x|+|v|)$ and we have 
\[
\begin{split}
&\dot\varphi-\Delta_v\varphi-|\nabla_v\varphi|^2-\left<2\nabla_vg,\nabla_v \varphi\right>+\left<v,\nabla_x \varphi\right>\\&\geq \frac{\dot a(t)}{2}|x|^2+\left(\frac{\dot b(t)}{2}-b(t)^2-2c_1b(t)\right)|v|^2\\
&+\dot c(t)-n b(t)-(2b(t)c_1+a(t))|x||v|\\
&\geq \left(\frac{\dot b(t)}{2}-(1+c_1^2c_3+c_4)b(t)^2-\frac{c_1^2}{c_4}-\frac{1}{2c_2}\right)|v|^2\\
&+\left(\frac{\dot a(t)}{2}-\frac{c_2a(t)^2}{2}-\frac{1}{c_3}\right)|x|^2+\dot c(t)-n b(t). 
\end{split}
\]

Let $a(t)=\frac{1}{2c_2(T-t)}$ and $c_3=16c_2T^2$. Then 
\[
\begin{split}
\frac{\dot a(t)}{2}-\frac{c_2a(t)^2}{2}-\frac{1}{c_3}&=\frac{1}{8c_2(T-t)^2}-\frac{1}{16c_2T^2}\geq 0. 
\end{split}
\]

Let $b(t)=\frac{1}{4(1+c_1^2c_3+c_4)(T-t)}$, $c_4=8c_1^2T^2$, and $c_2=4T^2$. Then 
\[
\begin{split}
&\frac{\dot b(t)}{2}-(1+c_1^2c_3+c_4)b(t)^2-\frac{c_1^2}{c_4}-\frac{1}{2c_2}\\
&\geq\frac{1}{4T^2}-\frac{c_1^2}{c_4}-\frac{1}{2c_2}\geq 0. 
\end{split}
\]

By letting $c(t)=-\frac{n}{4(1+c_1^2c_3+c_4)}\log(T-t)+(2c_0+\delta)t$. Then 
\[
\dot\varphi-\Delta_v\varphi-|\nabla_v\varphi|^2-\left<2\nabla_vg,\nabla_v \varphi\right>+\left<v,\nabla_x \varphi\right>>2c_0.  
\]
as claimed. Finally, the growth condition on the function $\phi$ mentioned earlier can be achieved by choosing $T$ small enough. 

\end{proof}

\smallskip

\section{Optimal control problems and the Harnack inequalities}

In this section, we give the proof of Theorem \ref{main2}. For this, we first recall the results on the controllability of the following affine control system on $\Real^n$:
\begin{equation}\label{control}
\dot x(t)=X_0(x(t))+u_1(t)X_1(x(t))+...+u_m(t)X_m(x(t)). 
\end{equation}

A control $u:[0,T]\to\Real^k$ if there exists a partition of $[0,T]$ such that $u$ is constant on each interval of the partition. A point $y$ is reachable from $x$ at time $T$ by the control system (\ref{control}) if there is a piecewise constant control $u$ such that the corresponding solution of (\ref{control}) satisfies $x(0)=x$ and $x(T)=y$. Let $\mathcal A(x,T)$ be the set of points which are reachable from $x$ at time $T$ by the control system (\ref{control}). Let $\mathcal A(x,\leq T)=\cup_{0\leq t\leq T}\mathcal A(x, t)$. The control system (\ref{control}) is strongly controllable if for any time $T>0$ and any point $x$, $\mathcal A(x,\leq T)=M$. It is exact time controllable if $\mathcal A(x, T)=M$ for each $x$ in $M$ and each $T>0$. The following result is taken from \cite{JuKu} (see also \cite{Ju}). 

\begin{thm}\label{strong}
Assume that the vector field $X_0$ is of the form $X_0=p_0+p_1+...+p_k$, where $k$ is odd and each component of $p_i$ is a homogeneous polynomial of degree $i$. Assume also that $X_1, ..., X_m$ are constant vector fields. Suppose that the smallest vector space containing $X_1,...,X_m$ and invariant under $p_k$ is $\Real^n$. Then 
the control system (\ref{control}) is strongly controllable. 
\end{thm}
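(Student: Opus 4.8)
The plan is to derive strong controllability from the Lie algebra rank condition (LARC) together with the special structure of the drift, following the general scheme for polynomial control systems in \cite{JuKu}. There are two stages: first establish accessibility (the reachable sets have nonempty interior) by a Lie bracket computation, and then upgrade accessibility to controllability in arbitrarily small time using the homogeneity of $p_k$ and the parity of $k$.

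For accessibility, I would compute the Lie algebra $\mathfrak g$ generated by $\{X_0,X_1,\dots,X_m\}$. Since each $X_i$ is a constant vector field, $\mathrm{ad}_{X_i}$ acts on a polynomial vector field by directional differentiation and hence lowers its degree by one. Bracketing the top homogeneous component $p_k$ successively against $k$ of the constant fields therefore produces the constant vector fields $P(X_{i_1},\dots,X_{i_k})$, where $P$ is the symmetric $k$-linear map obtained by polarizing $p_k$; iterating (bracketing these new constant fields against $p_k$ again, and so on) generates precisely the smallest subspace of constant vector fields that contains $X_1,\dots,X_m$ and is invariant under $p_k$. By hypothesis this subspace is all of $\Real^n$, so $\mathfrak g$ contains every constant vector field and hence spans $T_x\Real^n$ at every point; note that the lower-order terms $p_0,\dots,p_{k-1}$ play no role here. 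As the system is real-analytic, the orbit theorem and Krener's theorem then give that $\mathcal A(x,\le T)$ has nonempty interior for every $x$ and every $T>0$, and in fact the orbit through any point is all of $\Real^n$.

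The main work is the passage from accessibility to strong controllability, i.e. $\mathcal A(x,\le T)=\Real^n$ for every $x$ and every $T>0$. Here I would exploit the one-parameter family of dilations $x\mapsto\lambda x$: after the accompanying rescaling of time and of the controls, the system converges, uniformly on compact sets as $\lambda\to 0$, to the homogeneous system $\dot y=p_k(y)+\sum_i v_iX_i$, the lower-order components entering with strictly positive powers of $\lambda$. Because $k$ is odd, $p_k(-y)=-p_k(y)$, so $y\mapsto -y$, $v\mapsto -v$ is a symmetry of this homogeneous system; consequently its reachable sets enjoy a symmetry about the origin which, combined with the dilation invariance and the rank condition (which the homogeneous system also satisfies), forces the homogeneous system to be controllable from the origin in arbitrarily short time. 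Transporting this conclusion back through the dilation, and treating $p_0+\dots+p_{k-1}$ as a perturbation that is small on the short time scales involved, one obtains that from any point one reaches a full neighborhood of any prescribed point in arbitrarily short time; a connectedness and semigroup argument then upgrades this to $\mathcal A(x,\le T)=\Real^n$.

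The hard part is clearly this last upgrade. The delicate points are: making precise the sense in which the rescaled reachable sets converge to those of the homogeneous limit; controlling the contribution of the lower-order terms uniformly in the relevant time scale; and bridging the gap between ``reaches a neighborhood of every point'' and ``reaches every point'' without enlarging the time. The standard device for handling these is the Lie saturate of the system: one shows that $-X_0$, or at least its dominant part $-p_k$, lies in the closed Lie saturate, after which the system is effectively symmetric and the Chow--Rashevskii theorem closes the argument. This is exactly the route taken in \cite{JuKu}, to which I would defer for the quantitative estimates.
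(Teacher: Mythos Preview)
The paper does not prove this theorem at all: it is quoted verbatim as ``taken from \cite{JuKu} (see also \cite{Ju})'' and used as a black box in the proof of Theorem~\ref{main2}. There is therefore no proof in the paper to compare your proposal against.

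That said, your outline is a faithful sketch of the Jurdjevic--Kupka argument itself. The two stages you isolate are the right ones: (i) the Lie algebra rank condition, obtained by observing that iterated brackets of the constant fields against the top homogeneous part $p_k$ generate exactly the $p_k$-invariant subspace spanned by $X_1,\dots,X_m$, hence all of $\Real^n$; and (ii) the upgrade from accessibility to strong controllability via dilations, the parity $p_k(-y)=-p_k(y)$, and the Lie saturate showing that $-p_k$ (and thus effectively $-X_0$) belongs to the saturated system. Your identification of the delicate points---convergence of rescaled reachable sets, uniform control of the lower-order remainder, and the passage from open to exact reachability---is accurate, and deferring the estimates to \cite{JuKu} is exactly what the paper does in any case.

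In short: your proposal is correct and in fact supplies more than the paper, which simply invokes the reference.
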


For affine control systems (\ref{control}), exact time controllability follows from strong controllability and a result in \cite{JuSu} (see also \cite[Chapter 3]{Ju}). 

Finally, we give the proof of Theorem \ref{main2}. 

\begin{proof}[Proof of Theorem \ref{main2}]
Let $\gamma$ be a path in $\Real^n\times\Real^n$. By (\ref{g}) and Theorem \ref{matrixmain}, 
\[
\frac{d}{dt} g(\gamma(t))\geq \tr(N_3)+\left|\nabla_vg\right|^2_{\gamma(t)}-\left<v,\nabla_xg\right>_{\gamma(t)} +dg(\dot\gamma(t))+h(\gamma(t)). 
\]

Let $V=(V_x,V_v)$ be a vector in $\Real^n\times\Real^n$. The infimum $\inf_{p}(|p_v|^2-\left<v, p_x\right>+\left<p_x,V_x\right>+\left<p_v,V_v\right>)$ is $-\infty$ unless $V_x=v$. Under this assumption, the above infimum becomes $-\frac{1}{4}|V_v|^2$. Therefore, if we assume that $\gamma$ is a path defined by the control system (\ref{control}) with a piecewise constant control $u$, $X_0(x,v)=v_1\partial_{x_1}+...+v_n\partial_{x_n}$, and $X_i(x,v)=\partial_{v_i}$, then 
\[
\frac{d}{dt} g(\gamma(t))\geq \tr(N_3)-\frac{1}{4}|u(t)|^2+h(\gamma(t)). 
\]

Assume that $\gamma(s)=x$ and $\gamma(t)=y$. Note that such a path exists since the control system (\ref{control}), in this case, is exact time controllable by Theorem \ref{strong} and the remark after the statement of Theorem \ref{strong}. It follows that 
\[
g_t(y)-g_s(x)\geq \int_s^t\tr(N_3)d\tau-\int_s^t\frac{1}{4}|u(\tau)|^2-h(\gamma(\tau))d\tau. 
\]

By taking the infimum over all these paths $\gamma$, the result follows. 
\end{proof}

\smallskip

\end{document}